\numberwithin{equation}{section}
\numberwithin{figure}{section}
\theoremstyle{plain}
\newtheorem{thm}{Theorem}
  \theoremstyle{plain}
  \newtheorem{conjecture}[thm]{Conjecture}
  \theoremstyle{plain}
  \newtheorem{prop}[thm]{Proposition}
  \theoremstyle{plain}
  \newtheorem{lem}[thm]{Lemma}
  \theoremstyle{plain}
  \newtheorem{cor}[thm]{Corollary}
 \newtheorem{rem}[thm]{Remark}
\newcommand{\BR}{{\mathbb{R}}}
\newcommand{\gC}{\Gamma}
\newcommand{\gS}{\Sigma}
\newcommand{\ti}[1]{\widetilde{#1}}
\newcommand{\vol}{\text{vol}}
\newcommand{\GL}{\text{GL}}
\numberwithin{equation}{section} %% Comment out for sequentially-numbered
\begin{document}

\title{Milnor-Wood inequalities for products}

\author{Michelle Bucher and Tsachik Gelander}

\begin{abstract}

We prove Milnor-wood inequalities for local products of manifolds. As a consequence, we establish the generalized Chern Conjecture for products $M\times \Sigma^k$ for any product of a manifold $M$ with a product of $k$ copies of a surface $\Sigma$ for $k$ sufficiently large. 
\end{abstract}

\thanks{Michelle Bucher acknowledges support from the Swiss National Science Foundation grant PP00P2-128309/1. Tseachik Gelander acknowledges support of the Israel Science Foundation and the European Research Council.}

\maketitle
\section{Introduction}

Let $M$ be an $n$-dimensional topological manifold. Consider the Euler class $\varepsilon_n(\xi)\in H^n(M,\mathbb{R})$ and Euler number $\chi(\xi)=\langle \varepsilon_n(\xi),[M]\rangle $ of  oriented $\mathbb{R}^n$-vector bundles over $M$. We say that the manifold $M$ satisfies a Milnor-Wood inequality with constant $c$ if for every {\it flat}  oriented $\mathbb{R}^n$-vector bundles $\xi$ over $M$, the inequality
$$ |\chi(\xi)| \leq c\cdot |\chi(M)|$$
holds. Recall that a bundle is flat if it is induced by a representation of the fundamental group $\pi_1(M)$. We denote by $MW(M)\in \mathbb{R} \cup \{+\infty \}$ the smallest such constant. 

If $X$ is a simply connected Riemannian manifold, we denote by $\widetilde{MW}(X)\in \mathbb{R} \cup \{+\infty \}$ the supremum of the values of $MW(M)$ when $M$ runs over all closed quotients of $X$. 

Milnor's seminal inequality \cite{Mi58} amounts to showing that the Milnor-Wood constant of the hyperbolic plane $\mathcal{H}$ is $\widetilde{MW}(\mathcal{H})=1/2$, and in \cite{BuGe09}, we showed that $\widetilde{MW}(\mathcal{H}^n)=1/2^n$.

We prove a product formula for the Milnor-Wood inequality valid for any closed manifolds:

\begin{thm}\label{thm:MWProd}
%If $X_{1},X_{2}$ satisfy a Milnor-Wood inequality with constants
%$\text{MW}(X_{1}),\text{MW}(X_{2})$ respectively, 
%\marginpar{\red{+ some condition such as nonpos. curved... ?}}
%then $X_{1}\times X_{2}$ satisfies
%a Milnor-Wood inequality with constant 
For any pair of compact manifolds $M_1,M_2$
$$
 \text{MW}(M_{1}\times M_{2}) = \text{MW}(M_{1})\cdot \text{MW}(M_{2}).
$$
\end{thm}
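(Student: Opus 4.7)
The equality is proved via two inequalities.

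For the lower bound $\mathrm{MW}(M_1\times M_2)\ge\mathrm{MW}(M_1)\mathrm{MW}(M_2)$, given flat oriented bundles $\xi_i\to M_i$ of rank $n_i=\dim M_i$ whose Euler numbers approach $\mathrm{MW}(M_i)\,|\chi(M_i)|$, I would form the external Whitney sum $\xi := p_1^{\ast}\xi_1\oplus p_2^{\ast}\xi_2$ on $M_1\times M_2$, where $p_i$ are the projections. Pullbacks and direct sums preserve flatness, and the cross-product formula $\varepsilon(\xi)=p_1^{\ast}\varepsilon(\xi_1)\cup p_2^{\ast}\varepsilon(\xi_2)$ gives $\chi(\xi)=\chi(\xi_1)\chi(\xi_2)$. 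Combined with $\chi(M_1\times M_2)=\chi(M_1)\chi(M_2)$, letting each $\xi_i$ approach extremality yields the inequality.

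The upper bound $\mathrm{MW}(M_1\times M_2)\le\mathrm{MW}(M_1)\mathrm{MW}(M_2)$ is the substantive direction. Let $\xi$ be a flat oriented rank-$n$ bundle ($n=n_1+n_2$) on $M_1\times M_2$ coming from a representation $\rho\colon\pi_1(M_1)\times\pi_1(M_2)\to\mathrm{GL}_n^{+}(\mathbb{R})$; the images $A=\rho(\pi_1(M_1))$ and $B=\rho(\pi_1(M_2))$ commute. My plan is to decompose $\mathbb{R}^n$ into joint $A\times B$-invariant subspaces, decomposing $\xi$ correspondingly as $\bigoplus_i\xi_i$, and use the Whitney product formula $\varepsilon_n(\xi)=\prod_i\varepsilon(\xi_i)$ together with the Künneth decomposition
\[
H^n(M_1\times M_2;\mathbb{R})\cong\bigoplus_{p+q=n}H^p(M_1;\mathbb{R})\otimes H^q(M_2;\mathbb{R})
\]
to write $\chi(\xi)$ as a sum of bidegree contributions. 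Only the $(n_1,n_2)$-bidegree pairs nontrivially with $[M_1\times M_2]$, so the aim is to show that this component is expressible as a finite sum of cross products $\varepsilon_{n_1}(\eta_j)\times\varepsilon_{n_2}(\zeta_j)$ of Euler classes of flat bundles $\eta_j$ on $M_1$ of rank $n_1$ and $\zeta_j$ on $M_2$ of rank $n_2$, built from the commuting factors $\rho|_{\pi_1(M_1)}$ and $\rho|_{\pi_1(M_2)}$ restricted to joint invariant subspaces. Applying $\mathrm{MW}(M_i)$ factorwise then yields $|\chi(\xi)|\le\mathrm{MW}(M_1)\mathrm{MW}(M_2)\,|\chi(M_1\times M_2)|$.

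The main obstacle is that a commuting pair of subgroups of $\mathrm{GL}_n^{+}(\mathbb{R})$ does not a priori preserve a splitting $\mathbb{R}^n=\mathbb{R}^{n_1}\oplus\mathbb{R}^{n_2}$; the natural joint-invariant decomposition instead produces pieces of tensor-product form $p_1^{\ast}\eta\otimes p_2^{\ast}\zeta$, whose Euler classes are intricate polynomials in the Chern classes of the factors. The key leverage is that odd-rank real bundles have vanishing Euler class and that only the $(n_1,n_2)$-bidegree survives the pairing with the fundamental class, which should severely constrain which tensor-product summands contribute. Making this precise, most plausibly via a splitting-principle argument combined with a careful accounting of bidegrees under cup products of the form $\varepsilon(p_1^{\ast}\eta\otimes p_2^{\ast}\zeta)$, is the technical heart of the proof; the degenerate case where $\chi(M_1)=0$ or $\chi(M_2)=0$ would most likely need to be handled separately, for instance by a perturbation or continuity argument.
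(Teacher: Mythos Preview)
Your lower bound is fine and matches the paper (note that the supremum over flat bundles is actually attained, since there are only finitely many isomorphism classes, so no limiting argument is needed).

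The upper bound, however, has a genuine gap. Even granting the decomposition you sketch, expressing the $(n_1,n_2)$-component of $\varepsilon_n(\xi)$ as a \emph{finite sum} $\sum_j \varepsilon_{n_1}(\eta_j)\times\varepsilon_{n_2}(\zeta_j)$ does not yield the bound you claim: applying the Milnor--Wood inequality termwise gives only
\[
|\chi(\xi)|\le N\cdot\mathrm{MW}(M_1)\,\mathrm{MW}(M_2)\,|\chi(M_1\times M_2)|,
\]
where $N$ is the number of summands, and you have no control on $N$. The paper avoids this by proving a sharp dichotomy: after passing to a finite cover (which scales both sides of the inequality by the same factor), either $\rho^*(\varepsilon_n)=0$ outright, or $\rho$ is, up to a deformation within its connected component of the representation variety, a genuine product $\rho_1\times\rho_2$ with $\rho_i:\pi_1(M_i)\to\mathrm{GL}^+_{n_i}(\mathbb{R})$---so there is exactly one term.

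Establishing this dichotomy requires two ingredients absent from your outline. First, if either image $\rho(\pi_1(M_i))$ is amenable, the Euler class vanishes by a bounded-cohomology argument \cite[Lemma~4.3]{BuGe09}; this is precisely what disposes of the non-semisimple situations in which your ``joint-invariant decomposition'' into tensor pieces need not exist at all. Second, when both images are nonamenable, a structural analysis of commuting nonamenable subgroups (Proposition~\ref{prop:prod}) shows that, after passing to finite index, either a genuine tensor piece appears---whose Euler class then vanishes by Lemma~\ref{lem:tensor}, proved via a short degree argument in continuous cohomology rather than any splitting-principle computation---or the representation is already block-diagonal of the correct shape. Your proposal correctly identifies the tensor pieces as the obstacle but provides no mechanism to make them \emph{vanish}; it only promises to track their contribution, which is not enough. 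Likewise, the suggested ``perturbation or continuity'' treatment of the case $\chi(M_i)=0$ has no clear meaning here, since the set of flat bundles is discrete.
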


For the product formula for universal Milnor-Wood constant, we restrict to Hadamard manifolds: 

\begin{thm}\label{thm Hadamard}
Let $X_1,X_2$ be Hadamard manifolds. Then
$$
 \widetilde{\text{MW}}(X_1\times X_2)= \widetilde{\text{MW}}(X_1) \widetilde{\text{MW}}(X_2). 
$$
\end{thm}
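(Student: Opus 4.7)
The plan is to establish the two inequalities $\geq$ and $\leq$ separately.

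For the lower bound, I would apply Theorem \ref{thm:MWProd} to products of compact quotients. Given any closed quotients $M_i$ of $X_i$, the Riemannian product $M_1 \times M_2$ is a closed quotient of $X_1 \times X_2$, so
$$\widetilde{\text{MW}}(X_1 \times X_2) \;\geq\; \text{MW}(M_1 \times M_2) \;=\; \text{MW}(M_1) \cdot \text{MW}(M_2),$$
where the equality uses Theorem \ref{thm:MWProd}. Taking the supremum over $M_1$ and $M_2$ yields the $\geq$ direction. (The corner case where one factor has infinite universal constant is handled in the standard way by letting sequences of quotients realize the supremum.)

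For the upper bound, the subtle point is that a closed quotient $M = (X_1 \times X_2)/\Gamma$ need not be a genuine product: $\Gamma$ may fail to split as a product of lattices in the factor isometry groups, and no finite cover of $M$ need be a product either. Nevertheless, $M$ is always locally a Riemannian product, since the parallel decomposition $T(X_1 \times X_2) = TX_1 \oplus TX_2$ descends to a parallel splitting of $TM$. The strategy is to invoke the Milnor-Wood inequality for local products of manifolds promised in the abstract (which would be proved in the body of the paper), giving
$$\text{MW}(M) \;\leq\; \widetilde{\text{MW}}(X_1) \cdot \widetilde{\text{MW}}(X_2)$$
for every closed $M$ that is a local product modeled on $X_1$ and $X_2$. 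Taking the supremum over all closed quotients then gives $\widetilde{\text{MW}}(X_1 \times X_2) \leq \widetilde{\text{MW}}(X_1)\widetilde{\text{MW}}(X_2)$.

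The main obstacle is therefore the local product Milnor-Wood inequality itself, which is precisely where the Hadamard hypothesis should enter (and cannot be weakened to arbitrary compact factors, as in Theorem \ref{thm:MWProd}, since one now needs a uniform bound over \emph{all} closed quotients, including irreducible ones). Since factor-by-factor arguments break down for irreducible $\Gamma$, a more delicate approach is needed. A natural route is via bounded cohomology, exploiting the boundedness of the Euler class: one would represent the Euler class of a flat bundle $\xi \to M$ by a cocycle adapted to the local product structure and bound its evaluation against the fundamental class $[M]$ by a product of factorwise estimates. A Fubini-type decomposition of the evaluation pairing, combined with averaging over the closure of each projection of $\Gamma$ in $\text{Isom}(X_i)$, should reduce the problem to the universal Milnor-Wood bound of each factor, in analogy with the method used for $\widetilde{\text{MW}}(\mathcal{H}^n) = 1/2^n$ in \cite{BuGe09}.
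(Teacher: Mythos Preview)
Your lower bound is fine and matches the paper. The upper bound, however, is where your proposal has a genuine gap. You correctly identify the obstacle---an irreducible lattice $\Gamma$ in $\text{Isom}(X_1)\times\text{Isom}(X_2)$---but your proposed route via bounded cohomology and a ``Fubini-type decomposition'' is not a proof: no such factorwise splitting of the Euler cocycle is available for a general representation $\rho:\Gamma\to\GL_n^+(\BR)$ of an irreducible $\Gamma$, and averaging over the closures of the projections does not by itself force $\rho$ to respect the product structure. The analogy with $\widetilde{\text{MW}}(\mathcal{H}^n)=1/2^n$ is misleading, because in that case the factors are all $\mathcal{H}$ and the argument already relies on the rigidity ingredients you are missing here.

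The paper's actual argument is structurally quite different and uses three heavy inputs you do not mention. First, the flat torus theorem disposes of the case where $\Gamma$ has a nontrivial normal abelian subgroup (a Euclidean factor forces the Euler class to vanish). Second, and crucially, the Farb--Weinberger theorem \cite{FaWe} is invoked to conclude that if $M$ is irreducible and aspherical with $k\ge 2$ de Rham factors, then $X$ is a symmetric space of non-compact type; this is where the Hadamard hypothesis is actually used. Third, once $\Gamma$ is an irreducible lattice in a semisimple Lie group $G=\prod G_i$, superrigidity extends (the semisimple part of) $\rho$ to $G$, after which Proposition \ref{prop:prod} and Lemma \ref{lem:tensor} force either vanishing of the Euler class or a block-diagonal factorisation through $\prod\GL_{n_i}$. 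A proportionality principle (Lemma \ref{lem:14}) then transfers the computation to a genuine product $\prod M_i$ of $X_i$-manifolds, where Theorem \ref{thm:MWProd} applies. None of these steps is captured by your sketch.
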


%The theorem is based on Theorem \ref{thm:MWProd}, the super-rigidity theorems \cite{Margulis,Monod}, the Farb-Weinberger theorem \cite[Theorem 1.3]{FaWe}, the flat torus theorem and the reducible/irreducible dichotomy that holds for symmetric spaces.

%\begin{prop}
%Let $X_1,X_2$ be Hadamard manifolds without Euclidian de-Rahm factors. 
%Let $X_1,X_2$ be simply connected Riemannian manifolds 
%Let $M$ be a closed $X_1\times X_2$ manifold which is irreducible in the sense that its fundamental group $\pi_1(M)$ embedded in $\text{Isom}(X_1\times X_2)$ via deck transformations projects densely to $\text{Isom}(X_i)$. Then for every flat vector bundle $\xi$ over $M$,
%$$
 %|\chi(\xi)|\le\text{MW}(X_1)\text{MW}(X_2)|\chi(M)|. 
%$$
%\end{prop}

One important application of Milnor-Wood inequalities is to make progress on the generalized Chern Conjecture. 

\begin{conjecture}[Generalized Chern Conjecture] Let $M$ be a closed oriented aspherical manifold. If the tangent bundle $TM$ of $M$ admits a flat structure then $\chi(M)=0$.
\end{conjecture}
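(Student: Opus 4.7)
The plan is to extract the conjecture from a Milnor--Wood bound on $M$ itself. Under the hypothesis that $TM$ admits a flat structure, $TM$ is isomorphic (as an oriented rank-$n$ vector bundle) to a flat oriented $\BR^n$-bundle $\xi$; by the Poincar\'e--Hopf theorem, $\chi(\xi)=\chi(TM)=\chi(M)$. Substituting $\xi$ into the Milnor--Wood inequality gives
\[
|\chi(M)| \leq \text{MW}(M)\cdot |\chi(M)|,
\]
so whenever $\text{MW}(M) < 1$ the conjecture follows at once.

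The problem is thus reduced to showing $\text{MW}(M) < 1$. For an arbitrary closed aspherical $M$ no such bound is known, so I would first attack the problem in a product setting where Theorem~\ref{thm:MWProd} applies. If $M$ has a finite cover that splits as $N\times \Sigma^k$ with $\Sigma$ a closed hyperbolic surface, then Theorem~\ref{thm:MWProd} together with Milnor's bound $\text{MW}(\Sigma)=1/2$ gives
\[
\text{MW}(N\times \Sigma^k) = \text{MW}(N)\cdot 2^{-k}.
\]
As long as $\text{MW}(N)<\infty$, this is strictly less than $1$ once $k$ is large enough, and the first paragraph then forces $\chi(N\times \Sigma^k)=0$. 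Since $\chi(\Sigma)\neq 0$, multiplicativity of the Euler characteristic yields $\chi(M)=0$; this recovers the product case of the conjecture announced in the abstract.

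The main obstacle to reaching the full conjecture along these lines is the finiteness of $\text{MW}(M)$. Finiteness amounts to uniform boundedness of the Euler number over all flat oriented rank-$n$ bundles on $M$, equivalently to the pullback of the universal Euler class being bounded in the bounded cohomology of $\pi_1(M)$ for every representation into $\text{GL}_n^+(\BR)$. This is classical for locally symmetric spaces of noncompact type and for certain nonpositively curved examples, but is not known for arbitrary aspherical manifolds. Even once finiteness is available, upgrading to the strict inequality $\text{MW}(M) < 1$ is delicate outside the product setting. The product formula of Theorem~\ref{thm:MWProd} circumvents both difficulties simultaneously by producing an arbitrarily small factor $2^{-k}$ from iterated surface factors; pushing the strategy past this product regime toward irreducible aspherical manifolds is the genuinely hard step and is where the Milnor--Wood approach currently stops short of the full conjecture.
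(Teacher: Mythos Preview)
The statement you are attempting is labeled a \emph{Conjecture} in the paper, and the paper does not prove it; it only establishes the special product case recorded as Corollary~\ref{cor1}. Your first two paragraphs reproduce exactly that argument: for $N\times\Sigma^k$ with $\text{MW}(N)<\infty$, Theorem~\ref{thm:MWProd} together with Milnor's bound gives $\text{MW}(N\times\Sigma^k)=2^{-k}\,\text{MW}(N)<1$ once $k$ is large, and the Milnor--Wood inequality then forces $\chi=0$. So on the part that is actually provable your proposal coincides with the paper, and your third paragraph is an honest admission that the full conjecture remains out of reach---which is the correct status.

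One correction to your diagnosis of the obstacle: finiteness of $\text{MW}(M)$ is not the issue. As the paper notes right after stating the conjecture, there are only finitely many isomorphism classes of oriented $\BR^n$-bundles over $M$ admitting a flat structure, so $\text{MW}(M)<\infty$ automatically whenever $\chi(M)\neq 0$; and if $\chi(M)=0$ the conjecture is vacuous. The genuine obstruction is solely the \emph{strict} inequality $\text{MW}(M)<1$, for which no mechanism is known outside the product regime. Your bounded-cohomology sentence conflates this with a different kind of uniform bound and should be dropped.
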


As the name indicates, this conjecture implies the classical Chern conjecture for affine manifolds predicting the vanishing of the Euler characteristic of affine manifolds. This is because an affine structure on $M$ induces a flat structure on the tangent bundle $TM$. 

As pointed out in \cite{Mi58}, if $MW(M)<1$ then the Generalized Chern Conjecture holds for $M$. Indeed, if $\chi(M)\neq 0$ the inequality
$$ |\chi(M)|=|\chi(TM)|\leq MW(M) \cdot |\chi(M)| < |\chi(M)|$$
leads to a contradiction. 

One can use Theorem \ref{thm:MWProd} to extend the family of manifolds satisfying the Generalized Chern Conjecture. For instance: 

\begin{cor}\label{cor1}
Let $M$ be a manifold with $MW(M)<+\infty$. Then the product $M\times\Sigma^{k}$,
where $\Sigma$ is a surface of genus $\geq2$ and $k>\mbox{log}_{2}(MW(X))$ satisfies the Generalized Chern Conjecture. In particular, if $\chi(M)\neq 0$, then  $M\times\Sigma^{k}$
does not admit an affine structure.
\end{cor}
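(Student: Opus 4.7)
The plan is to feed the product formula (Theorem \ref{thm:MWProd}) into Milnor's threshold $MW<1$ argument recalled right before the statement. First I would iterate Theorem \ref{thm:MWProd} $k$ times on the factors $\Sigma, \Sigma, \dots, \Sigma$ to obtain
$$
\text{MW}(M\times \Sigma^{k})=\text{MW}(M)\cdot \text{MW}(\Sigma)^{k}.
$$
Since $\Sigma$ is a closed surface of genus $\geq 2$, it is a compact quotient of the hyperbolic plane $\mathcal{H}$, so Milnor's original inequality gives $\text{MW}(\Sigma)=1/2$ (this is the case $n=1$ of $\widetilde{\text{MW}}(\mathcal{H}^{n})=1/2^{n}$ recalled in the introduction). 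Consequently
$$
\text{MW}(M\times \Sigma^{k})=\frac{\text{MW}(M)}{2^{k}},
$$
which is strictly less than $1$ precisely when $k>\log_{2}\text{MW}(M)$, the assumed range of $k$.

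Next I would invoke Milnor's observation reproduced in the introduction: if $\text{MW}(N)<1$ then the Generalized Chern Conjecture holds for $N$, because a flat structure on $TN$ would give
$$
|\chi(N)|=|\chi(TN)|\leq \text{MW}(N)\cdot |\chi(N)|<|\chi(N)|,
$$
forcing $\chi(N)=0$. Applying this with $N=M\times \Sigma^{k}$ yields the first assertion of the corollary.

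For the ``in particular'' clause, suppose $\chi(M)\neq 0$ and that $M\times \Sigma^{k}$ admits an affine structure. Then its tangent bundle $T(M\times \Sigma^{k})$ inherits a flat structure, so by the previous paragraph $\chi(M\times \Sigma^{k})=0$. However the multiplicativity of the Euler characteristic gives
$$
\chi(M\times \Sigma^{k})=\chi(M)\cdot \chi(\Sigma)^{k}=\chi(M)\cdot (2-2g)^{k}\neq 0,
$$
since $g\geq 2$ and $\chi(M)\neq 0$, a contradiction.

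There is essentially no obstacle here once Theorem \ref{thm:MWProd} is in hand; the only point that deserves a sentence is the asphericity issue, since the Generalized Chern Conjecture is stated for aspherical manifolds. This is automatic for us: $\Sigma$ is aspherical (its universal cover is $\mathcal{H}$), and any product of aspherical manifolds is aspherical, so if $M$ is aspherical then so is $M\times \Sigma^{k}$; in any case Milnor's $\text{MW}<1$ argument above does not itself use asphericity, so the Euler-characteristic vanishing goes through regardless.
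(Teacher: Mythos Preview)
Your proposal is correct and is exactly the argument the paper has in mind: the paper does not spell out a proof of the corollary, but presents it as an immediate application of Theorem~\ref{thm:MWProd} together with Milnor's $\widetilde{MW}(\mathcal{H})=1/2$ and the $MW<1$ criterion recalled just above, which is precisely the chain you wrote down.
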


\begin{rem}
1. One can replace $\gS^k$ in Corollary \ref{cor1} by any $\mathcal{H}^k$-manifold.

2. The corollary is somehow dual to a question of Yves Benoist \cite [Section 3, p. 19]{Be00} asking wether for every closed manifold $M$ there exists $m$ such that $M\times S^m$ admits an affine structure. For example, for any hyperbolic manifold $M$, the product $M\times S^1$ admits an affine structure, but in general $m=1$ is not enough. Indeed, $\mbox{Sp(2,1)}$ has
no nontrivial $9$-dimensional representations and the dimension of
the associated symmetric space is $8$.

%for every $X$ there exists $m$ such that
%$(X\times\mathbb{R}^{m})$-manifolds have affine structure? For example
%for $X=\mathbb{H}^{n}$ or $\mbox{SL}(n,\mathbb{R})/SO(n)$, we can
%take $m=1$. In general $m=1$ is not enough: $\mbox{Sp(2,1)}$ has
%no nontrivial $9$-dimensional representations and the dimension of
%the associated symmetric space is $8$. \marginpar{Find ref. to Yves Benoist}
\end{rem}

Note that since there are only finitely many isomorphism classes of oriented $\mathbb{R}^n$-bundles which admit a flat structure, it is immediate that the set
$$ \{ |\chi(\xi)| \mid \xi \mathrm{\ is \ a \ flat \ oriented \ }  \mathbb{R}^n \mathrm{-bundle \ over \ }M\}$$
is finite for every $M$. In particular, if $\chi(M)\neq 0$, there exists a finite Milnor-Wood constant $MW(M)<+\infty$. 

However, in general, the Milnor-Wood constant can be infinite. Indeed, the implication $\chi(M)=0\Rightarrow\chi(\xi)=0$, for $\xi$ a flat oriented $\mathbb{R}^n $-bundle, does not hold in general. In Section \ref{sec:example} we exhibit a flat bundle $\xi$ with $\chi(\xi)\neq 0$ over a manifold $M$ with $\chi(M)=0$. This example is inspired by  Smillie's counterexample of the Generalized Chern Conjecture \cite{Sm77} for nonaspherical manifolds, and likewise this manifold is nonaspherical. 

The following questions are quite natural: 
\begin{enumerate}
\item Does there exist a finite constant $c(n)$ depending on $n$ only such that $MW(M)\leq c(n)$ for every closed aspherical $n$-manifold?
\item Let $X$ be a contractible Riemannian manifold such that there exists a closed $X$-manifold $M$ with $MW(M)<\infty$. Is $\ti{MW}(X)$ necessarily finite?
\item Does $\chi(M)=0\Rightarrow\chi(\xi)=0$ for flat t oriented $\mathbb{R}^n $-bundles $\xi$ over aspherical manifolds $M$?
\end{enumerate}

%\begin{cor}
%If $X$ satisfies a MW inequality with constant $MW(X)$, then for
%any locally $X$ manifold $M$, the product $M\times\Sigma^{k}$,
%for $\Sigma$ a surface of genus $\geq2$ and $k>\mbox{log}_{2}(MW(X))$
%satisfies the GCC.
%\end{cor}

\section{Representations of products}

\begin{lem}\label{lem:prod-rep}
Let $H_1,H_2$ be groups and $\rho:H_1\times H_2\to\GL_n(\BR)$ a representation of the direct product and suppose that $\rho(H_i)$ is non-amenable for both $i=1,2$. Then, up to replacing the $H_i$'s by finite index subgroups, either 
\begin{itemize}
\item $V=\BR^n$ decomposes as an invariant direct sum $V=V'\oplus V''$ where the restriction $\rho |{V'}=\rho_1'\otimes\rho_2'$ is a nontrivial tensor representation, or 
\item $V=V_1\oplus V_2$ where $G_i$ is scalar on $V_{i}$.\end{itemize}
\end{lem}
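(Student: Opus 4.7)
The natural strategy is to pass to Zariski closures and use the representation theory of direct products of algebraic groups. Let $G_{i}:=\overline{\rho(H_{i})}^{\mathrm{Zar}}\subseteq\GL(V)$ denote the Zariski closure of $\rho(H_{i})$. Each $G_{i}$ has finitely many connected components, so after replacing each $H_{i}$ by a finite-index subgroup we may assume $G_{i}$ is Zariski connected. As $\rho(H_{1})$ and $\rho(H_{2})$ commute in $\GL(V)$, so do $G_{1}$ and $G_{2}$, making $V$ into a $(G_{1}\times G_{2})$-module. The non-amenability hypothesis implies each $G_{i}$ has a non-compact semisimple Levi factor; in particular, the semisimple part of $G_{i}$ admits no nontrivial characters, so any scalar action of $G_{i}$ on a submodule factors through the center.

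Next, I would consider the socle $V^{\mathrm{ss}}\subseteq V$ (the maximal semisimple $(G_{1}\times G_{2})$-submodule) and decompose it as $V^{\mathrm{ss}}=\bigoplus_{\alpha}W_{\alpha}$ into irreducibles. Working over $\BC$, and descending to $\BR$ at the end after a possible further finite-index refinement, every irreducible module for a product of connected algebraic groups is an external tensor product $W_{\alpha}\cong W_{\alpha}^{(1)}\otimes W_{\alpha}^{(2)}$ with $W_{\alpha}^{(i)}$ irreducible for $G_{i}$. The dichotomy of the lemma can then be read off directly from the socle summands: if some $W_{\alpha}$ has both tensor factors non-scalar, then setting $V':=W_{\alpha}$ gives an invariant subspace on which $\rho$ restricts to a nontrivial tensor representation, producing the first alternative.

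If no such $\alpha$ exists, then on every $W_{\alpha}$ at least one of $G_{1}, G_{2}$ acts by scalars, and grouping the summands according to which group is scalar yields a decomposition $V^{\mathrm{ss}}=V_{1}\oplus V_{2}$ with $G_{i}$ scalar on $V_{i}$. The final step is to lift this from the socle to all of $V$, i.e., to show that $V$ is itself semisimple as a $(G_{1}\times G_{2})$-module in this scalar case. Since each $G_{i}$ acts on each summand by a character of its (reductive) semisimple Levi, and characters of connected semisimple groups are trivial, the relevant $\mathrm{Ext}^{1}$-groups vanish and the decomposition extends to $V$.

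\textbf{Main obstacle.} The hard part is that last extension step: the unipotent radicals of the $G_{i}$ could a priori produce nonsplit extensions preventing the socle decomposition from extending to all of $V$. One expects that the commutation of $G_{1}$ and $G_{2}$ together with the non-amenability assumption forces these extensions to split, but making this precise will likely require either an invariant inner-product argument on each tensor factor, or a direct computation on a Jordan--H\"older series using that scalar-type representations of a group with nontrivial semisimple part admit no nonsplit self-extensions. Descending the tensor-product decomposition from $\BC$ back to $\BR$ is a secondary technicality handled by standard Galois-descent arguments, possibly at the cost of another finite-index passage.
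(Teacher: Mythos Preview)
The paper's own proof is a bare reference to \cite[Proposition~6.1]{BuGe09}, so a line-by-line comparison is impossible; your Zariski-closure strategy together with the external-tensor decomposition of irreducibles for a product of connected groups is the standard route and almost certainly what that reference contains.

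There is, however, a real gap in your outline. In the tensor alternative you set $V':=W_\alpha$, an irreducible summand of the \emph{socle}, but the lemma asks for an invariant splitting $V=V'\oplus V''$ of the whole space, and a socle constituent need not be a direct summand of $V$ once the $G_i$ carry nontrivial unipotent radicals. The complementation problem you isolate as the ``main obstacle'' in the scalar case is therefore equally present---and unaddressed---in the tensor case. Concretely, take $H_1=H_2=F_2$, a nonscalar absolutely irreducible $H_1$-module $A$, non-isomorphic nonscalar irreducible $H_2$-modules $B,B'$, and a nonsplit extension $0\to A\otimes B\to V\to A\otimes B'\to 0$ coming from a class $\mathrm{id}_A\otimes\eta$ with $0\ne\eta\in H^1(H_2,\mathrm{Hom}(B',B))$; then $V$ is indecomposable, its socle $A\otimes B$ has no invariant complement, and the lemma holds only because $V$ itself equals $A\otimes E_\eta$ with $V''=0$---a conclusion your socle argument does not reach. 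Your proposed $\mathrm{Ext}^1$-vanishing via the semisimple Levi likewise does not neutralise the unipotent radical you yourself flag. The clean repair, implicit in the paper's applications, is to pass at the outset to the semisimplification of $\rho$: then $V$ is a semisimple $(G_1\times G_2)$-module, the socle is all of $V$, and your dichotomy on the irreducible summands is immediate; for the Euler-class purposes of the paper this is harmless, since a block-upper-triangular representation and its block-diagonal semisimplification lie in the same path component of the representation variety.
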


\begin{proof}
This can be easily deduced from the proof of  \cite[Proposition 6.1]{BuGe09}.
\end{proof}

\begin{prop}
\label{prop:prod} 
%Let $X=\prod_{i=1}^{k}X_{i}$ be a product of simply
%connected Riemannian manifolds of dimensions $\mbox{dim}(X_{i})=m_{i}$.
Let $H=\prod_{i=1}^{k}H_{i}$
%<\mbox{Isom}(X)^{o}$ 
be a direct product
of groups and let $\rho:H\to\text{GL}_{n}^{+}({\mathbb{R}})$ be an
orientable representation, where $n=\sum_{i=1}^{k}m_{i}$. Suppose that $\rho(H_i)$ is nonamenable for every $i$. Then, up
to replacing the $H_i$'s by finite index subgroups $H^{\prime}=\prod_{i=1}^{k}H_{i}^{\prime}$,
%and dividing $\rho(H^{\prime})$ by an amenable normal subgroup $A\vartriangleleft\rho(H^{\prime})=S\ltimes A$,
either
\begin{enumerate} 
\item there exists $1\leq i_{0}< k$ such that $V=\BR^n$ decomposes non-trivially to an invariant direct sum $V=V'\oplus V''$ and the restricted representation $\rho\mid_{(H_{i_{0}}^{\prime}\times \prod_{i> i_0}H_{i}^{\prime},V')}$
\[
 H_{i_{0}}^{\prime}\times \prod_{i> i_0}H_{i}^{\prime}\longrightarrow \GL(V')
\]
is a nontrivial tensor, or
\item the representation $\rho^{\prime}$ factors through
\[
 \rho^{\prime}:\prod_{i=1}^{k}H_{i}^{\prime}\longrightarrow \left( \prod_{i=1}^{k}\mbox{\GL}_{m'_{i}}(\BR)\right)^+\longrightarrow\GL^+_{n}(\BR),
\]
where the latter homomorphism is, up to conjugation, the canonical diagonal embedding,
and $\rho^{\prime}(H_{i}^{\prime})$ restricts to a scalar representation
on each $\GL_{m_{j}}(\BR)$, for $i\neq j$.
\end{enumerate}
Moreover, if all $m_{i}$ are even then either $m_i'<m_i$ for some $i$ or one can replace $\GL$ with
$\GL^{+}$ everywhere. 
\end{prop}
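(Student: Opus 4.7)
The plan is to proceed by induction on $k$, the base case $k = 2$ being precisely Lemma \ref{lem:prod-rep} (with its two alternatives matching cases (1) (with $i_0 = 1$) and (2) of the proposition). For the inductive step with $k \geq 3$, I regroup the factors as $H_1 \times (H_2 \times \cdots \times H_k)$ and apply Lemma \ref{lem:prod-rep} to this pair; non-amenability of $\rho$ on the second factor is immediate from non-amenability of, say, $\rho(H_2)$. The two alternatives of the lemma will translate into either case (1) of the proposition directly, or into a partial decomposition to which the inductive hypothesis can be applied.

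If the tensor alternative of the lemma holds, then, after passing to finite-index subgroups, there is an $H$-invariant decomposition $V = V' \oplus V''$ with $\rho|_{V'}$ a genuine tensor of a representation of $H_1'$ with one of $\prod_{i \geq 2} H_i'$. This is exactly case (1) of the proposition with $i_0 = 1$.

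If instead the scalar alternative of the lemma holds, I obtain $V = V_1 \oplus V_2$ with $H_1'$ acting by scalars on $V_2$ and $\prod_{i \geq 2} H_i'$ acting by scalars on $V_1$. For each $i \geq 2$, the image $\rho|_{V_2}(H_i')$ remains non-amenable, since its action on $V_1$ is by scalars (hence amenable) and so non-amenability of $\rho(H_i')$ must come from the $V_2$-component. I then apply the inductive hypothesis to $\rho|_{V_2}: \prod_{i \geq 2} H_i \to \GL(V_2)$, a representation of a product of $k-1$ non-amenable factors. If this produces the tensor case on an invariant subspace $V_2' \subset V_2$ for some $i_0 \in \{2,\ldots, k-1\}$, then because $H_1'$ acts on all of $V_2$ by scalars, $V_2'$ is automatically $H'$-invariant and the tensor structure on $V_2'$ gives case (1) of the proposition with this $i_0$. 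Otherwise the induction returns a diagonal decomposition $V_2 = \bigoplus_{i \geq 2} W_i$ with each $H_i'$ acting essentially on $W_i$ and scalarly on $W_j$ for $j \neq i$. Combined with $V_1$ (on which $H_1'$ is the essential actor and the other groups are scalar), this yields the full diagonal decomposition $V = V_1 \oplus W_2 \oplus \cdots \oplus W_k$ required for case (2).

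For the moreover clause, assume all $m_i$ are even and that the procedure terminates in case (2) with $m_i' = m_i$ for every $i$. Given $h_i \in H_i'$, the block-diagonal form of $\rho(h_i)$ yields
\[
\det \rho(h_i) = \det(\rho_i(h_i)) \cdot \prod_{j \neq i} \lambda_{ij}(h_i)^{m_j},
\]
where $\lambda_{ij}(h_i)$ is the scalar by which $h_i$ acts on $W_j$. Since every $m_j$ is even, the product over $j \neq i$ is positive, and orientability of $\rho$ then forces $\det \rho_i(h_i) > 0$; hence $\rho_i(H_i') \subset \GL_{m_i}^+$, allowing $\GL$ to be replaced by $\GL^+$ throughout. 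The main obstacle I expect is the careful bookkeeping of the successive passages to finite-index subgroups (and the assembly of a finite-index \emph{product} subgroup inside the finite-index subgroup of $\prod_{i \geq 2} H_i$ returned by the lemma); the non-amenability propagation and the orientation computation are comparatively routine.
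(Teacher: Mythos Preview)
Your proposal is correct and follows essentially the same approach as the paper: induction on $k$, applying Lemma~\ref{lem:prod-rep} to $H_1 \times \prod_{i>1} H_i$, and then invoking the inductive hypothesis on the restriction to the complementary summand. You are in fact slightly more careful than the paper, which omits the non-amenability check for $\rho|_{V_2}(H_i')$ and the verification that the tensor summand returned by induction is invariant under the full product.
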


The notation $ \left( \prod_{i=1}^{k}\mbox{\GL}_{m'_{i}}(\BR)\right)^+$ stands for the intersection of $ \prod_{i=1}^{k}\mbox{\GL}_{m'_{i}}(\BR)$ with the positive determinant matrices. 

\begin{proof}
We argue by induction on $k$.
For $k=2$ the alternative is immediate from Lemma \ref{lem:prod-rep}. Suppose $k>2$. If Item $(1)$ does not hold, it follows from Lemma \ref{lem:prod-rep} that, up to replacing the $H_i$'s by some finite index subgroups, $V$ decomposes invariantly to $V=V_1\oplus V_1'$ where $\rho(H_1)$ is scalar on $V_1'$ and $\rho(\prod_{i>1}H_i)$ is scalar on $V_1$. We now apply the induction hypothesis for $\prod_{i>1}H_i$ restricted to $V_1'$.

Finally, in Case $(2)$, since $\sum m_i=n$, either $m_i'<m_i$ for some $i$ or equality holds everywhere. In the later case,
if all the $m_i$'s are even, given $g\in H_i$, since the restriction of $\rho(g)$ each $V_{j\ne i}$ is scalar, it has positive determinant. We deduce that also $\rho(g)|_{V_i}$ has positive determinant.
\end{proof}

\section{Multiplicativity of the Milnor-Wood constant for product manifolds -- A proof of Theorem \ref{thm:MWProd}} 

Let $M_1,M_2$ be two arbitrary manifolds. We prove that 
$$MW(M_1\times M_2)=MW(M_1)\cdot MW(M_2).$$
First note that the inequality $MW(M_1\times M_2)\geq MW(M_1)\cdot MW(M_2)$ is trivial. Indeed, let $\xi_1, \xi_2$ be flat oriented bundles over $M_1$ and $M_2$ respectively of the right dimension such that $|\chi(\xi_i)|=MW(M_i)\cdot |\chi(M_i)|$ for $i=1,2$. Then $\xi_1 \times \xi_2$ is a flat bundle over $M_1\times M_2$ with 
$$|\chi(\xi_1\times \xi_2)|=|\chi(\xi_1)||\chi(\xi_2)|=MW(M_1)\cdot MW(M_2)\cdot |\chi(M_1\times M_2)|.$$

For the other inequality, let $\xi$ be a flat  oriented $\mathbb{R}^{n}$-bundle over $M_1\times M_2$, where $n=\mathrm{Dim}(M_1)+\mathrm{Dim}(M_2)$. We need to show that  
\[
\left|\chi(\xi)\right|\leq MW(X_{1})\cdot MW(X_{2})\cdot \left|\chi(M)\right|.\]
Observe that if we replace $M$ by a finite cover, and the bundle
$\xi$ by its pullback to the cover, then both sides of the previous
inequality are multiplied by the degree of the covering. 

The flat bundle $\xi$ is induced by a representation \[
\rho:\pi_1(M_1\times M_2)\cong \pi_1(M_1)\times \pi_1(M_2) \longrightarrow\mbox{GL}_{n}^{+}(\mathbb{R}).\]
If $\rho(\pi_1(M_i))$ is amenable for $i=1$ or $2$, then $\rho^{*}(\varepsilon_{n})=0$ \cite[Lemma 4.3]{BuGe09} and hence $\chi(\xi)=0$ and there is nothing to prove. Thus, we can without loss of generality suppose
that, upon replacing $\Gamma$ by a finite index subgroup the representation $\rho$ factors as in Proposition \ref{prop:prod}. 

In case (1) of the proposition, we obtain that $\rho^*(\varepsilon_n)=0$ by Lemma \ref{lem:tensor} and \cite[Lemma 4.2]{BuGe09}. In case (2) we get that $\rho$ factors through
$$\rho:\pi_1(M_1)\times \pi_1(M_2)\longrightarrow \left( \mbox{\GL}_{m'_{1}}(\BR)\times \mbox{\GL}_{m'_{2}}(\BR) \right)^+\xrightarrow{\ i \ }\\GL^+_{n}(\BR),
$$
where the latter embedding $i$ is up to conjugation the canonical embedding. Furthermore, up to replacing $\rho$ by a representation in the same connected component of 
$$
 \mathrm{Rep}(\pi_1(M_1)\times \pi_1(M_2), \left( \mbox{\GL}_{m'_{1}}(\BR)\times \mbox{\GL}_{m'_{2}}(\BR)\right)^+)
$$ 
which will have no influence on the pullback of the Euler class, we can without loss of generality suppose that the scalar representations of $\pi_1(M_1)$ on $ \mbox{\GL}_{m'_{2}}$ and $\pi_1(M_2)$ on $\mbox{\GL}_{m'_{1}}$ are trivial, so that $\rho$ is a product representation. If $m'_1$ or $m'_2$ is odd, then $i^*(\varepsilon_n)=0\in H^n_c(( \mbox{\GL}_{m'_{1}}(\BR)\times \mbox{\GL}_{m'_{2}}(\BR))^+)$. If $m'_1$ and $m'_2$ are both even then Proposition \ref{prop:prod} further tells us that either $m'_i<m_i$ for $i=1$ or $2$, or the image of $\rho$ lies in $\mbox{\GL}^+_{m_{1}}(\BR)\times \mbox{\GL}^+_{m_{2}}(\BR)$. In the first case, the Euler class vanishes  \cite[Lemma 4.2]{BuGe09}, while in the second case, we immediately obtain the desired inequality. This finishes the proof of Theorem \ref{thm:MWProd}.

\section{Multiplicativity of the universal Milnor-Wood constant for Hadamard manifolds - a proof of Theorem \ref{thm Hadamard}} 
%
%To prove Theorem \ref{thm Hadamard}, we will start by proving the following special case of a modified version of the theorem: 
%
%\begin{thm}\label{thm:8} \marginpar{Please decide on the formulation. In the proof, you use just two factors, in the statement $k$. Or do you prove directly Theorem \ref{thm Hadamard}? }
%Let $X_1,\ldots,X_k$ be irreducible Hadamard manifolds with isometry groups $G_i=\text{Isom}(X_i)$. Let $\gC\le\prod_{i=1}^kG_i$ be a torsion free cocompact lattice which project densely every $G_i$ and $M=\gC\backslash (\prod_{i=1}^kX_i)$. Then
%$$
% \text{MW}(M) =\prod_{i=1}^k\text{MW}(X_i). 
%$$
%\end{thm}
%
%By induction one then deduces:
%
%\begin{cor}\label{cor:9}
%Let $X=\prod_{i=1}^nX_i,~n\ge 2$ be a Hadamard manifold with irreducible de-Rham factors $X_i$. Then
%$$
% \text{MW}(X)=\prod_{i=1}^n\text{MW}(X_i).
%$$
%\end{cor}
%
%Note that Theorem  \ref{thm Hadamard} now clearly follows from Corollary \cite{cor:9}.
%
%

%%%%%%%%%%%%%%%%%%%%%

Theorem \ref{thm Hadamard} can be reformulated as follows:

\begin{thm}\label{thm:8} 
Let $X$ be a Hadamard manifold with de-Rham decomposition $X=\prod_{i=1}^k X_i$, then $\ti{\text{MW}}(X)=\prod_{i=1}^k\ti{\text{MW}}(X_i)$. 
\end{thm}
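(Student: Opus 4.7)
The plan is to establish Theorem \ref{thm:8} by proving the two inequalities separately. The easy direction $\widetilde{\text{MW}}(X) \geq \prod_i \widetilde{\text{MW}}(X_i)$ follows directly from Theorem \ref{thm:MWProd}: for each $i$ and any $\epsilon > 0$, choose a closed $X_i$-manifold $M_i$ with $\text{MW}(M_i) \geq \widetilde{\text{MW}}(X_i) - \epsilon$; then $\prod_i M_i$ is a closed $X$-quotient, and iterating Theorem \ref{thm:MWProd} gives $\text{MW}(\prod_i M_i) = \prod_i \text{MW}(M_i)$, which yields the desired bound as $\epsilon \to 0$.

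For the reverse inequality, let $M = \Gamma \backslash X$ be an arbitrary closed $X$-manifold. The identity component of $\mathrm{Isom}(X)$ is, up to permutations of isometric de Rham factors, equal to $\prod_i \mathrm{Isom}(X_i)^\circ$; after passing to a finite cover of $M$---which does not affect $\text{MW}$, since both the Euler number of the pulled-back bundle and the Euler characteristic of the cover scale by the degree of the covering---we may assume that $\Gamma \subset \prod_i \mathrm{Isom}(X_i)$. In the best case, $\Gamma$ virtually splits as $\prod_i \Gamma_i$ with $\Gamma_i \subset \mathrm{Isom}(X_i)$ a uniform lattice, so that a finite cover of $M$ is isometric to $\prod_i (\Gamma_i \backslash X_i)$, and a further application of Theorem \ref{thm:MWProd} finishes the proof.

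The principal obstacle is the irreducible case---exemplified by Hilbert modular groups in $\mathrm{Isom}(\mathcal{H}^2)^2$---where $\Gamma$ does not virtually split and no finite cover of $M$ is a product. My plan here would be to work directly with any representation $\rho : \Gamma \to \text{GL}_n^+(\mathbb{R})$ defining a flat bundle $\xi$ on $M$: via a superrigidity-type argument (Margulis, when the factors are symmetric), either $\rho(\Gamma)$ is amenable---so that $\rho^*(\varepsilon_n) = 0$ by \cite[Lemma 4.3]{BuGe09}---or $\rho$ extends to a continuous representation of the ambient product $\prod_i \mathrm{Isom}(X_i)^\circ$. Once $\rho$ is defined on the product, Proposition \ref{prop:prod} decomposes it into tensor and diagonal components; the tensor components contribute zero to the pullback of the Euler class by Lemma \ref{lem:tensor} and \cite[Lemma 4.2]{BuGe09}, while the diagonal components yield the required bound $\prod_i \widetilde{\text{MW}}(X_i) \cdot |\chi(M)|$ exactly as in the proof of Theorem \ref{thm:MWProd}. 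Handling this irreducible case in the full generality of Hadamard manifolds---where classical Margulis superrigidity is not a priori available---is the key technical challenge, and one would likely need to invoke some intrinsic rigidity of the $\mathrm{Isom}(X_i)^\circ$-action on $\Gamma$ in place of the arithmetic-type input.
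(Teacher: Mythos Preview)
Your overall strategy matches the paper's, but there are two genuine gaps.

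First, the reduction from general Hadamard manifolds to a setting where superrigidity is available. You correctly flag this as the key challenge but leave it unresolved. The paper handles it as follows: if $\Gamma$ has a nontrivial normal abelian subgroup, the flat torus theorem forces a Euclidean de Rham factor and the Euler class vanishes; otherwise, since $M$ is irreducible and hence $\mathrm{Isom}(X)$ is non-discrete, the Farb--Weinberger theorem \cite[Theorem 1.3]{FaWe} forces $X$ to be a symmetric space of non-compact type. Only then does superrigidity (Margulis/Monod/GKM) become available, and it is applied to the semisimple quotient of the Zariski closure of $\rho(\Gamma)$ rather than to $\rho$ itself.

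Second, and more substantively, your final step in the irreducible case is \emph{not} ``exactly as in the proof of Theorem \ref{thm:MWProd}.'' In that proof $M = M_1 \times M_2$ is literally a product, so once $\rho$ splits diagonally the Euler number factors as $\chi(\xi) = \chi(\xi_1)\chi(\xi_2)$ and the bounds $|\chi(\xi_i)| \le \text{MW}(M_i)\,|\chi(M_i)|$ apply directly. Here $M$ is irreducible---it is not a product of closed $X_i$-manifolds---so even after $\rho$ extends to $\prod_i G_i$ and lands in the block-diagonal $(\prod_i \GL_{n_i})^+$, there is no immediate way to bound $\chi(\xi)$ by Euler numbers of flat bundles over closed $X_i$-manifolds. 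The paper bridges this with a proportionality principle (Lemma \ref{lem:14}): because the extended representation is defined on all of $G$, the ratio $\chi(\xi_\rho)/\mathrm{vol}(M)$ is the same for every closed $X$-manifold. One then picks auxiliary closed $X_i$-manifolds $M_i$ and transfers the computation to the genuine product $\prod_i M_i$, where the factorwise Milnor--Wood bounds do apply. This proportionality step is missing from your outline.
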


We shall now prove Theorem \ref{thm:8}. Note that the inequality "$\ge$" is obvious. 
%Moreover, if one of the $X_i$ is Euclidian then both sides of the equation are $0$. Assume that this is not the case.
Let $M=\gC\backslash X$ be a compact $X$-manifold. We must show that $\text{MW}(M)\le\prod_{i=1}^k\ti{\text{MW}}(X_i)$.
Note that $\gC$ is torsion free. Let us also assume that $k\ge 2$.
If $M$ is reducible one can argue by induction using Theorem \ref{thm:MWProd}. Thus we may assume that $M$ is irreducible.
Observe that this implies that $\text{Isom}(X)$ is not discrete.
If $\gC$ admits a nontrivial normal abelian subgroup then by the flat torus theorem (see \cite[Ch. 7]{BH}) $X$ admits an Euclidian factor which implies the vanishing of the Euler class. Assuming that this is not the case we apply the Farb--Weinberger theorem \cite[Theorem 1.3]{FaWe} to deduce that $X$ is a symmetric space of non-compact type. 
Thus, up to replacing $M$ by a finite cover (equivalently, replace $\gC$ by a finite index subgroup), we may assume that $\gC$ lies in $G=\text{Isom}(X)^\circ=\prod_{i=1}^k\text{Isom}(X_i)^\circ=\prod_{i=1}^kG_i$ and $G$ is an adjoint semisimple Lie group without compact factors and $\gC\le G$ is irreducible in the sense that its projection to each factor is dense. Denote by $\ti G_i$ the universal cover of $G_i$, and by $\ti\gC\le\prod_{i=1}^k\ti G_i$ the pullback of $\gC$. 

Let $\rho:\gC\to\GL^+_n(\BR)$ be a representation inducing a flat oriented vector bundle $\xi$ over $M$. Up to replacing $\gC$ by a finite index subgroup, we may suppose that $\rho(\gC)$ is Zariski connected. Let $S\le \GL^+_n(\BR)$ be the semisimple part of the Zariski closure of $\rho(\gC)$, and let $\rho':\gC\to S$ be the quotient representation. By superrigidity, the map $\text{Ad}\circ\rho':\gC\to \text{Ad}(S)$ extends to $\phi:\gC\le \prod_{i=1}^k G_i\to \text{Ad}(S)$ (see \cite{Margulis,Monod,GKM}). This map can be pulled to $\ti\phi:\ti\gC\to S$. Recall also that $\prod_{i=1}^k \ti G_i$ is a central discrete extension of $\prod_{i=1}^k G_i$ and, likewise, $\ti\gC$ is a central extension of $\gC$. If $n_i=\dim X_i$ and $n=\sum_{i=1}^k n_i$ we deduce from Proposition \ref{prop:prod} and Lemma \ref{lem:tensor} that either the Euler class vanishes or the image of $\ti\phi$ lies (up to decomposing the vector space $\BR^n$ properly) in $(\prod_{i=1}^k\GL_{n_i})^+$. 

Suppose that $\text{MW}(X_i)$ is finite for all $i=1,\ldots,k$ and let $M_i$ be closed $X_i$-manifolds. Let $\xi'$ be the flat vector bundle on $\prod_{i=1}^kM_i$ coming from $\ti\rho$ reduced to $\prod_{i=1}^kM_i$, and let $\xi'_i$ be the vector bundle on $M_i$ induced by $\ti\rho_i,~i=1,\ldots,k$. By Lemma \ref{lem:14}, we have
$$
 \frac{\chi(\xi)}{\vol(M)}=\frac{\chi(\xi')}{\vol(\prod_{i=1}^kM_i)}=\prod_{i=1}^k\frac{\chi(\xi_i')}{\vol(M_i)}\le
 \prod_{i=1}^k\text{MW}  (X_i),
$$ 
which finishes the proof of Theorem \ref{thm:8}. \qed

%%%%%%%%%%%%%%%%%%%%%%%%

\section{Example: a flat bundle with nonzero Euler number over a manifold with zero Euler characteristic}\label{sec:example}

Recall that given two closed manifolds of even dimension, the Euler
characteristic of connected sums behaves as\[
\chi(M_{1}\sharp M_{2})=\chi(M_{1})+\chi(M_{2})-2.\]
The idea is to find $M=M_{1}\sharp M_{2}$ such that $M_{1}$ admits
a flat bundle with nontrivial Euler number in turn inducing such a
bundle on the connected sum, and to choose then $M_{2}$ in such a
way that the Euler characteristic of the connected sum vanishes. Take
thus\[
M_{1}=\Sigma_{2}\times\Sigma_{2},\; M_{2}=(S^{1}\times S^{3})\sharp(S^{1}\times S^{3})\;\mbox{and}\; M=M_{1}\sharp M_{2}.\]
These manifolds have the following Euler characteristics: \begin{eqnarray*}
\chi(M_{1}) & = & 4,\\
\chi(M_{2}) & = & 2\chi(S^{1}\times S^{3})-2=-2,\\
\chi(M) & = & 0.\end{eqnarray*}
Let $\eta$ be a flat bundle over $\Sigma_{2}$ with Euler number
$\chi(\eta)=1$. (Note that we know that such a bundle exists by \cite{Mi58}.)
Let $f:M\rightarrow M_{1}$ be a degree $1$ map obtained by sending
$M_{2}$ to a point, and consider \[
\xi=f^{*}(\eta\times\eta).\]
Obviously, since $\eta$ is flat, so is the product $\eta\times\eta$
and its pullback by $f$. Moreover, the Euler number of $\xi$ is
\[
\chi(\xi)=\chi(\eta\times\eta)=1.\]
Indeed, the Euler number of $\eta\times\eta$ is the index of a generic
section of the bundle, which we can choose to be nonzero on $f(M_{2})$,
so that we can pull it back to a generic section of $\xi$ which will
clearly have the same index as the initial section on $\eta\times\eta$.

\section{Proportionality principles and vanishing of the Euler class of tensor products}

%Recall Hirzebruch's generalized proportionality's principle:

%\begin{thm}
%Let $X$ be a simply connected Riemannian manifold. Then there exists
%$H(X)\in\mathbb{R}$ such that any closed locally $X$ manifold $M$
%satisfies the equality\[
%H(X)=\frac{\chi(M)}{\mbox{Vol}(M)}.\]
%\end{thm}
%\begin{cor}
%Let $X_{1},X_{2}$ be two simply connected Riemannian manifolds. Then\[
%H(X_{1}\times X_{2})=H(X_{1})\cdot H(X_{2}).\]
%\end{cor}

\begin{lem}\label{lem:14}
Let $X$ be a simply connected Riemannian manifold, $G= \mbox{Isom}(M)$ and $\rho:G\to\GL_n^+(\BR)$ a representation.
Then $\frac{\chi (\xi_\rho )}{\vol(M)}$, where $M=\gC\backslash X$ is a closed $X$-manifold and $\xi_\rho$ is the flat vector bundle induced on $M$ by $\rho$ restricted to $\gC$, is a constant independent of $M$.
\end{lem}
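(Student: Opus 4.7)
The plan is to invoke a proportionality principle tailored to the present setting: since $\rho$ extends to the full isometry group $G$, the pullback $\rho^*\varepsilon_n$ should arise from the continuous cohomology of $G$ and hence be represented by a $G$-invariant differential form on $X$; and $G$-invariance will force such a top-degree form to be a fixed scalar multiple of the Riemannian volume form, uniformly in $\gC$.

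Concretely, I would proceed in three steps. First, reduce to the case that $X = G/K$ is homogeneous with compact isotropy $K$; as argued in the proof of Theorem~\ref{thm:8}, this is the only case of interest, since a Euclidean factor of $X$ would force $\chi(\xi_\rho) = 0$ outright via the flat torus theorem. Second, since $K$ is compact, I would average an inner product on $\BR^n$ so that $\rho(K)$ acts orthogonally, thereby exhibiting $\xi_\rho$ as the $G$-invariant oriented Riemannian vector bundle $\gC\backslash(G\times_K\BR^n)$ equipped with the canonical $G$-invariant metric connection coming from the reductive decomposition $\mathfrak{g} = \mathfrak{k}\oplus\mathfrak{p}$. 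Applying Chern--Weil theory to this connection produces a $G$-invariant closed $n$-form $\omega_\rho$ on $X$ whose cohomology class on $M$ represents $\varepsilon_n(\xi_\rho)$. Third, the isotropy representation $K\to\SO(\mathfrak{p})$ is orientation-preserving and orthogonal, so the $K$-invariant subspace of $\Lambda^n\mathfrak{p}^*$ is one-dimensional and spanned by the Riemannian volume element; hence $\omega_\rho = c_\rho\cdot\mathrm{dvol}_X$ for a constant $c_\rho$ depending only on $\rho$, and integrating over $M$ yields
$$
\chi(\xi_\rho) \;=\; \int_M \omega_\rho \;=\; c_\rho\cdot\vol(M),
$$
as required.

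The main obstacle is the second step: the flat connection on $\xi_\rho$ is not metric in general, so Chern--Weil cannot be applied to it directly (indeed its Pfaffian would be identically zero), and one must pass through the auxiliary $G$-invariant metric connection to obtain a differential-form representative of the real Euler class. The cleanest way to formalise this is via the van Est isomorphism $H^n_c(G;\BR)\cong(\Lambda^n\mathfrak{p}^*)^K$ together with the naturality of the comparison map $H^n_c(G;\BR)\to H^n(\gC;\BR)\cong H^n(M;\BR)$, which shows that $\rho^*\varepsilon_n$ is represented in de Rham cohomology by the descent of the invariant form $\omega_\rho$ associated by van Est to the continuous class $\rho^*\varepsilon_n\in H^n_c(G;\BR)$, closing the argument.
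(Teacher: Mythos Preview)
Your proposal is correct, and your final paragraph is precisely the paper's argument: the paper invokes the isomorphism $H^*_c(G)\cong H^*(\Omega^*(X)^G)$, notes that in top degree this is one-dimensional and spanned by the volume form $\omega_X$, writes $\rho^*\varepsilon_n=\lambda\cdot[\omega_X]\in H^n_c(G)$ for some $\lambda$ independent of $\gC$, and concludes $\chi(\xi_\rho)/\vol(M)=\lambda$. The paper does not carry out your steps~2--3 (the explicit Chern--Weil construction of $\omega_\rho$ via the $G$-invariant metric connection on $G\times_K\BR^n$); it works purely at the level of continuous cohomology, so your hands-on construction is extra but not needed.
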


\begin{proof}
There is a canonical isomorphism $H^*_c(G)\cong H^*(\Omega^*(X)^G))$ between the continuous cohomology of $G$ and the cohomology of the cocomplex of $G$-invariant differential forms $\Omega^*(X)^G$ on $X$ equipped with its standard differential. (For $G$ a semisimple Lie group, every $G$-invariant form is closed, hence one further has $H^*(\Omega^*(X)^G)\cong \Omega^*(X)^G$.) In particular, in top dimension $n=\mathrm{dim}(X)$, the cohomology groups are $1$-dimensional $H^n_c(G)\cong H^n(\Omega^*(X)^G))\cong \mathbb{R}$ and contain the cohomology class given by the volume form $\omega_X$.  

Since the bundle $\xi_\rho$ over $M$ is induced by $\rho$, its Euler class $\varepsilon_n(\xi_\rho)$ is the image of $\varepsilon_n\in H^n_c(\mathrm{GL}^+(\mathbb{R},n)$ under
$$ H^n_c(\mathrm{GL}^+(\mathbb{R},n)\xrightarrow{\  \rho^*  }  H^n_c(G) \longrightarrow H^n(\Gamma)\cong H^n(M), $$
where the middle map is induced by the inclusion $\Gamma \hookrightarrow G$. In particular, $\rho^*(\varepsilon_n)=\lambda\cdot [\omega_X]\in H^n_c(G)$ for some $\lambda \in \mathbb{R}$ independent of $M$. It follows that $\chi(\xi_\rho)/\mathrm{Vol}(M)=\lambda$. 
\end{proof}

\begin{lem}\label{lem:tensor} Let $\rho_\otimes:\GL^+(n,\BR)\times \GL^+(m,\BR)\rightarrow \GL^+(nm,\BR)$ denote the tensor representation. If $n,m\geq 2$, then 
$$\rho_\otimes^*(\varepsilon_{nm})=0\in H_c^{nm}(\GL(n,\BR)\times \GL(m,\BR)).$$
\end{lem}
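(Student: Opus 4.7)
The plan is to realize $\rho_\otimes^*(\varepsilon_{nm})$ as an explicit Chern--Weil polynomial, to show that this polynomial involves only Pontryagin classes (no Euler class of either factor appears), and to conclude using the vanishing of Pontryagin classes for flat bundles.

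First, I would use the van Est / Chern--Weil realization of continuous cohomology: the universal Euler class $\varepsilon_{nm}\in H^{nm}_c(\GL^+(nm,\BR))$ is represented (up to normalization) by the Pfaffian polynomial on $\mathfrak{so}(nm)$, so $\rho_\otimes^*(\varepsilon_{nm})$ is represented by the $\SO(n)\times\SO(m)$-invariant polynomial
\[
(A,B)\longmapsto \mathrm{Pf}\bigl(A\otimes I_m+I_n\otimes B\bigr),\qquad A\in\mathfrak{so}(n),\ B\in\mathfrak{so}(m).
\]
Since $A\otimes I_m$ and $I_n\otimes B$ commute, they admit a simultaneous eigenvector decomposition on $\BC^n\otimes\BC^m$, so the eigenvalues of their sum are $\alpha+\beta$ as $\alpha$ ranges over eigenvalues of $A$ and $\beta$ over eigenvalues of $B$.

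Next, assuming both $n$ and $m$ are even (the case $nm$ odd is trivial, and the mixed parity case is addressed below), write the eigenvalues of $A$ as $\pm i\alpha_1,\ldots,\pm i\alpha_{n/2}$ and of $B$ as $\pm i\beta_1,\ldots,\pm i\beta_{m/2}$. Pairing the resulting imaginary eigenvalues of $A\otimes I+I\otimes B$ into $\pm i$-pairs yields
\[
\mathrm{Pf}\bigl(A\otimes I_m+I_n\otimes B\bigr)=\pm\prod_{r=1}^{n/2}\prod_{s=1}^{m/2}\bigl(\alpha_r^2-\beta_s^2\bigr).
\]
The key structural point is that this is a polynomial in the \emph{squares} $\alpha_r^2$ and $\beta_s^2$, symmetric in each collection separately; it therefore lies in the subring generated by the universal Pontryagin classes $p_k=\sigma_k(\alpha_r^2)$ of the first factor and $p_l=\sigma_l(\beta_s^2)$ of the second, with no Euler class of either factor appearing.

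To finish, I would invoke the standard vanishing of Pontryagin classes of flat bundles (a flat connection has zero curvature, so all its Chern--Weil representatives vanish identically). Translating this vanishing to continuous cohomology, the universal Pontryagin classes are zero in $H^*_c(\GL^+(k,\BR))$; hence any polynomial in them vanishes in $H^*_c(\GL^+(n,\BR)\times\GL^+(m,\BR))$, giving $\rho_\otimes^*(\varepsilon_{nm})=0$. For the mixed-parity subcase, say $n$ odd and $m$ even, the eigenvalue $0$ of $A$ produces extra eigenvalues $\pm i\beta_s$ of $A\otimes I+I\otimes B$, contributing an additional factor $\prod_s\beta_s=e(F)$ to the Pfaffian; the remaining factor is still a polynomial (without constant term) in Pontryagin classes of $E$ and $F$, which vanishes in $H^*_c$ by the same reasoning, and hence so does the full expression.

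The main obstacle is making rigorous the step ``polynomial in Pontryagin classes $\Rightarrow$ zero in $H^*_c(\GL^+(k,\BR))$''. This is a known consequence of Chern--Weil theory applied to the universal flat $G$-bundle over $BG^\delta$, but to use it cleanly one must either identify $H^*_c$ with invariant forms on the symmetric space and track where the Chern--Weil map from $S(\mathfrak{so}(k)^*)^{\SO(k)}\cong H^*(\mathrm{BSO}(k))$ lands under the Cartan decomposition, or transfer the vanishing from individual flat bundles on $B\Gamma$'s to a statement about $H^*_c$ via a sufficient supply of lattices. Either route is standard but is the only non-formal ingredient in the argument.
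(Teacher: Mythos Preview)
Your proof is correct and follows a genuinely different route from the paper's. The paper argues in two cases: for $n=m=2$ it uses the symmetry that swaps the two $\GL^+(2,\BR)$ factors (which fixes the class in $H^4_c(\GL^+(2,\BR)^2)$ but reverses the orientation of the tensor, forcing the class to equal its negative); for $n+m<nm$ it observes that the Euler class lies in the image of $H^{nm}(B(\GL^+(n)\times\GL^+(m)))\to H^{nm}_c(\GL^+(n)\times\GL^+(m))$, and that this map has image concentrated in degrees $\le n+m$, hence vanishes in degree $nm$. Your approach instead computes the Chern--Weil representative $\mathrm{Pf}(A\otimes I+I\otimes B)$ directly, identifies it as a polynomial in the Pontryagin classes of the two factors (with no Euler classes appearing, which is the crux), and then kills it using the vanishing of Pontryagin classes in $H^*_c(\GL^+(k,\BR))$.

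The two arguments rest on the same underlying fact---that the image of $H^*(B\SO(k))\to H^*_c(\GL^+(k,\BR))$ is spanned by the Euler class alone, equivalently that all Pontryagin classes die there---but deploy it differently. Your route is more explicit and has the virtue of treating the case $n=m=2$ uniformly with the rest (indeed your formula gives $\pm(\alpha^2-\beta^2)=\pm(p_1\otimes 1-1\otimes p_1)$ there, with no need for a separate symmetry trick). The paper's route is shorter once one grants the degree bound, avoiding the eigenvalue bookkeeping. Your acknowledged ``main obstacle'' is a genuine point but is indeed standard: the injectivity of $H^*_c(G)\hookrightarrow H^*(\Gamma)$ for a cocompact torsion-free lattice $\Gamma$ (invariant forms on a symmetric space are harmonic, hence inject by Hodge theory), combined with Chern--Weil vanishing of real Pontryagin classes for flat bundles on $\Gamma\backslash G/K$, gives exactly what you need.
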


\begin{proof} The case $n=m=2$ was proven in \cite[Lemma 4.1]{BuGe09}, based on the simple observation that interchanging the two $\GL^+(2,\BR)$ factors does not change the sign of the top dimensional cohomology class in $H_c^4(\GL(2,\BR)\times \GL(2,\BR))\cong \BR$, but it changes the orientation on the tensor product, and hence the sign of the Euler class in $H^4_c(\GL^+(4,\BR))$. 

Let us now suppose that at least one of $n,m$ is strictly greater than $2$, or equivalently, that $n+m<nm$. The Euler class is in the image of the natural map 
$$H^{nm}(B\GL(nm,\BR))\longrightarrow H^{nm}_c(\GL(nm,\BR)).$$
By naturality, we have a commutative diagram
 \[
\xymatrix{H^{nm}(B\GL^+(nm,\BR)) \ar^{\rho^*_{\otimes}}[d]\ar^{}[r] &H^{nm}_c(\GL^+(nm,\BR)) \ar^{\rho^*_{\otimes}}[d]\\
H^{nm}(B(\GL^+(n,\BR)\times \GL^+(m,\BR)))\ar^{}[r] &H^{nm}_c(\GL^+(n,\BR)\times \GL^+(m,\BR))). }
\]
Since the image of the lower horizontal arrow is contained in degree $\leq n+m$, it follows that $\rho_\otimes^*(\varepsilon_{nm})=0$.
\end{proof}

%Let $X$ be a simply connected Riemannian manifold of dimension $n$,
%and let $G=\mbox{Isom}(X)^{o}$ denote the identity component of its
%group of isometry. Let $M$ be a closed locally $X$ manifold, and
%let $\Gamma=\pi_{1}(M)$ be its fundamental group, which sits as a
%cocompact lattice $\Gamma<G$ in $G$. Let $\varphi_{M}$ be the following
%composition of homomorphisms:\[
%\varphi_{M}:H_{c}^{n}(G)\longrightarrow H^{n}(\Gamma)\longrightarrow H^{n}(M),\]
%where the first map is induced by the inclusion $\Gamma<G$ and the
%second one is the canonical map between the corresponding cohomology
%groups. 
%\begin{lem}
%If
%$\varphi_{M}\neq0$ then any lift $\varepsilon\in H_{c}^{n}(G)$ of
%$\varepsilon_{n}(TM)$ is universal in the sense that if $N$ is another
%closed locally $X$ manifold, then $\varphi_{N}(\varepsilon)=\varepsilon_{n}(TN)$.
%\end{lem}


\begin{thebibliography}{Sm77}
\bibitem[Be00]{Be00} Y. Benoist, {\it Tores affines} in {\it Crystallographic groups and their generalizations (Kortrijk, 1999)}, Contemp. Math. 262 (2000) 1--37.

\bibitem[BH99]{BH} R. Bridson, A. Haefliger, {\it Metric Spaces of Non-Positive 
Curvature}, Springer, 1999.

\bibitem[BuGe08]{BuGe08}M. Bucher, T. Gelander. \emph{Milnor-Wood
inequalities for locally $(\mathbb{H}^{2})^{n}$-manifolds. }C. R.
Acad. Sci. Paris 346 (2008), no. 11-12, 661-666.

\bibitem[BuGe11]{BuGe09}M. Bucher, T. Gelander. \emph{The generalized
Chern conjecture for manifolds that are locally a product of surfaces.}
Advances of Math., in press.

\bibitem[FaWe08]{FaWe} B. Farb, S. Weinberger, \emph{Isometries, rigidity and universal covers.} Ann. of Math. (2) 168 (2008), no. 3, 915Ð940.

\bibitem[GKM08]{GKM} T. Gelander, A. Karlsson, G.A. Margulis, Superrigidity, generalized harmonic maps and uniformly convex spaces. Geom. Funct. Anal. 17 (2008), no. 5, 1524Ð1550.

\bibitem[Ma91]{Margulis} G.A. Margulis, \emph{Discrete Subgroups of
Semisimple Lie Groups}, Ergeb. Math., Springer-Verlag, 1991.

\bibitem[Mi58]{Mi58}J. Milnor, \textit{On the existence of a connection
with curvature zero}. Comment. Math. Helv. 32, 1958, 215-223.

\bibitem[Mo06]{Monod} N. Monod, Superrigidity for irreducible lattices and geometric splitting. J. Amer. Math. Soc. 19 (2006), no. 4, 781Ð814.

\bibitem[Sm77]{Sm77} J. Smillie, \textit{Flat manifolds with non-zero
Euler characterisitcs}, Comment. Math. Helv. 52 (1977) 453--355.
\end{thebibliography}
\end{document}